\documentclass[11pt]{amsart}

\usepackage[T1]{fontenc}
\usepackage{amsmath,amssymb,mathtools}
\usepackage{booktabs}
\usepackage{hyperref}

\hypersetup{
  colorlinks=true,
  linkcolor=blue,
  citecolor=blue,
  urlcolor=blue
}

\newcommand{\Hol}{\operatorname{Hol}}
\newcommand{\im}{\operatorname{im}}

\newcommand{\R}{\mathcal R}

\newcommand{\codim}{\operatorname{codim}}

\theoremstyle{plain}
\newtheorem{theorem}{Theorem}[section]
\newtheorem{proposition}[theorem]{Proposition}
\newtheorem{lemma}[theorem]{Lemma}
\newtheorem{corollary}[theorem]{Corollary}

\theoremstyle{definition}

\theoremstyle{remark}
\newtheorem{remark}[theorem]{Remark}

\title[Affine fixed points on flat manifold pairs]
  {Affine fixed points on flat manifold pairs}

\author{Aaron Reite}

\thanks{This paper was developed through an explicitly disclosed human--AI
research collaboration. Anthropic's Claude and OpenAI Codex participated
materially in the literature search, conjecture refinement, exact computations,
proof development, adversarial review, and manuscript preparation. The research
provenance is described in Section~\ref{sec:ai}.}

\date{September 15, 2026}

\subjclass[2020]{Primary 55M20; Secondary 20H15, 57S30}
\keywords{relative Nielsen number, flat manifold, affine map, holonomy,
Schirmer number}

\begin{document}

\begin{abstract}
In this paper we consider affine selfmaps of compact flat manifold pairs.  We
derive an explicit formula for the relative Nielsen number in terms of the
ambient holonomy and the holonomy carried by the stabilizer of the lifted
submanifold.  The formula extends the flat specialization of Wong's
nilmanifold-pair formula to finite holonomy, agrees with the author's earlier
model-solvmanifold-pair formula on the flat overlap, and extends beyond it.
We also show that the
stabilizer holonomy cannot in general be replaced by the intrinsic holonomy of
the submanifold.  Examples exhibit interesting Schirmer theory and include
flat pairs outside the model-solvmanifold class.
\end{abstract}

\maketitle

\section{Introduction}

Let $f:(X,A)\to (X,A)$ be a selfmap of a compact polyhedral pair.  Schirmer
defined the relative Nielsen number by
\[
 N(f;X,A)=N(f)+N(f|_A)-N(f,f|_A),
\]
where the last term counts the essential fixed point classes of $f$ which
contain an essential fixed point class of the restriction
\cite{Schirmer1986}.  This number is a homotopy invariant and is a lower bound
for the minimum number of fixed points among maps homotopic to $f$ through
maps of pairs.  Under Schirmer's bypass and dimension hypotheses it is the
best possible lower bound.

The relative number has been computed on several classes of pairs.  Wong
proved an Anosov-type theorem for nilmanifold pairs
\cite{Wong1995}.  Cardona and Wong related the relative Nielsen and
Reidemeister numbers for Jiang-type pairs \cite{CardonaWong2001}.  Jezierski's
relative coincidence theory contains a finite-cover averaging theorem which
specializes to fixed points \cite[Corollary~3.3]{Jezierski1996}.  In a previous
paper we computed the relative Nielsen number for diagonal selfmaps of model
solvmanifold pairs \cite{Reite2008}.  Those manifolds have fundamental group
$\mathbb Z^n\rtimes_C\mathbb Z^p$, and the formula in
\cite[Theorem~4.5]{Reite2008} is indexed by cokernels of the base map and
filtered by determinants involving the gluing matrices $A^b$ in the notation
of that paper; the letter $A$ denotes the submanifold here.  Some model
solvmanifolds are flat, but the model construction neither supplies all
Bieberbach groups nor presents its relative term in intrinsic flat-manifold
data.  The purpose of the present paper is to give a closed, coordinate-free
determinant formula for affine maps of compact flat manifold pairs.  On the
overlap, the ambient stabilizer group $H_A$ below identifies the holonomy data
implicitly retained by the determinant filter in the earlier formula.  No
orientability assumption is required.

Absolute averaging computes the ambient and restriction Nielsen numbers
separately.  The relative number additionally requires identifying which
essential restriction classes lie in essential ambient classes and
controlling their possible fusion.  Proposition~\ref{prop:injection}
supplies this control.  Compatible torus covers then identify the correction
as a filtered sum over the ambient stabilizer holonomy $H_A$, giving an
explicit formula that applies beyond the earlier model pairs.

Let $\Pi\subset \mathbb R^n\rtimes O(n)$ be a Bieberbach group with
translation lattice $\Lambda$ and holonomy group $\Phi=\Pi/\Lambda$.  Write
\[
 M=\Pi\backslash\mathbb R^n.
\]
Let $\widetilde A=A_0+s$ be an affine subspace whose $\Pi$-translates are
pairwise disjoint or equal, and suppose that
\[
 A=\Pi_A\backslash\widetilde A,
 \qquad
 \Pi_A=\operatorname{Stab}_{\Pi}(\widetilde A),
\]
is compact.  Put
\[
 \Lambda_A=\Pi_A\cap\Lambda,
 \qquad H_A=\Pi_A/\Lambda_A.
\]
The group $H_A$ is naturally a subgroup of $\Phi$.  We call it the
\emph{ambient stabilizer holonomy}.  It need not be the intrinsic holonomy
group of $A$.

Let $F:(M,A)\to(M,A)$ be induced by an affine map
\[
 \widetilde F=(d,D):\mathbb R^n\longrightarrow\mathbb R^n
\]
chosen so that $\widetilde F(\widetilde A)\subseteq\widetilde A$.  If
$\alpha\in\Pi$, let $V_\alpha$ denote its linear part.  The main result is the
following.

\begin{theorem}\label{thm:intro}
Let $\Pi\subset\mathbb R^n\rtimes O(n)$ be a Bieberbach group with
translation lattice $\Lambda$ and holonomy group $\Phi=\Pi/\Lambda$, and put
$M=\Pi\backslash\mathbb R^n$.  Let $\widetilde A=A_0+s$ be an affine
subspace whose $\Pi$-translates are pairwise disjoint or equal.  Suppose that
$A=\Pi_A\backslash\widetilde A$ is compact, where
$\Pi_A=\operatorname{Stab}_\Pi(\widetilde A)$, and put
$\Lambda_A=\Pi_A\cap\Lambda$ and $H_A=\Pi_A/\Lambda_A$.  Let
$F:(M,A)\to(M,A)$ be induced by an affine lift
$\widetilde F=(d,D)$ satisfying
$\widetilde F(\widetilde A)\subseteq\widetilde A$.  Then
\begin{equation}\label{eq:main-intro}
\begin{split}
 N(F;M,A)
 ={}&\frac{1}{|\Phi|}
       \sum_{V\in\Phi}\left|\det(I-VD)\right| \\
 &+\frac{1}{|H_A|}
       \sum_{\substack{V\in H_A\\ \det(I-VD)=0}}
       \left|\det\left(I-(VD)|_{A_0}\right)\right|.
\end{split}
\end{equation}
\end{theorem}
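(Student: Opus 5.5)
We use Schirmer's decomposition $N(F;M,A)=N(F)+N(F|_A)-N(F,F|_A)$ and compute each term by averaging over torus covers. The target is the identity
\[
N(F|_A)-N(F,F|_A)=\frac{1}{|H_A|}\sum_{\substack{V\in H_A\\ \det(I-VD)=0}}\bigl|\det\bigl(I-(VD)|_{A_0}\bigr)\bigr| .
\]

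\textbf{Ambient term.} The first summand of \eqref{eq:main-intro} is the averaging formula for affine maps of flat manifolds,
\[
N(F)=\frac{1}{|\Phi|}\sum_{V\in\Phi}|\det(I-VD)| .
\]
To obtain it, let $\Lambda\backslash\mathbb R^n\to M$ be the torus cover. The lifts of $F$ are $\alpha\widetilde F$ with $\alpha\in\Pi$. The lift $\alpha\widetilde F$ with linear part $VD$ has Nielsen number $|\det(I-VD)|$ on the torus. Each essential class of $F$ is counted $|\Phi|$ times, because flat manifolds are essentially reducible to the cover.

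\textbf{Restricted term.} Since $\Pi_A$ acts on $\widetilde A$ by affine isometries, $A$ is a flat manifold. Its torus-type cover is $\Lambda_A\backslash\widetilde A$. The group $H_A=\Pi_A/\Lambda_A$ acts on this cover freely, though possibly ineffectively on $A_0$. The lifts of $F|_A$ are $\alpha\widetilde F|_{\widetilde A}$ for $\alpha\in\Pi_A$. The same averaging argument, run with the group $H_A$ (whose image in $O(A_0)$ may be smaller), gives
\[
N(F|_A)=\frac{1}{|H_A|}\sum_{V\in H_A}\bigl|\det\bigl(I-(VD)|_{A_0}\bigr)\bigr| .
\]
The key point is that the averaging identity holds for any finite group acting by deck transformations on a torus cover. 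A non-effective kernel only inflates the numerator and denominator by the same factor.

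\textbf{Common classes.} It remains to show
\[
N(F,F|_A)=\frac{1}{|H_A|}\sum_{\substack{V\in H_A\\ \det(I-VD)\neq 0}}\bigl|\det\bigl(I-(VD)|_{A_0}\bigr)\bigr| .
\]
This is the main obstacle. An essential class of $F|_A$ is determined by a lift $\alpha\widetilde F$ with $\alpha\in\Pi_A$, up to $\Pi_A$-conjugacy, together with a torus class inside $\widetilde A$. The ambient class containing it corresponds to the same lift $\alpha\widetilde F$ viewed in $\Pi$.

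For affine maps, a torus fixed class is essential exactly when the relevant determinant is nonzero. Hence the ambient class is essential iff $\det(I-V_\alpha D)\neq0$. In that case every fixed point class is a single point, so each essential restriction class lies in an essential ambient class. Proposition~\ref{prop:injection} then ensures that distinct essential restriction classes lie in distinct ambient classes, so there is no fusion. Consequently $N(F,F|_A)$ equals the number of essential restriction classes whose lift has nonsingular $I-V_\alpha D$.

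Running the restricted averaging only over those $V\in H_A$ gives the filtered sum. Subtracting it from $N(F|_A)$ leaves the sum over the $V$ with $\det(I-VD)=0$, which is the second summand of \eqref{eq:main-intro}.

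\textbf{Checks needed.} The delicate point is that the filter is well defined on $H_A$. We need $\det(I-VD)$ to depend only on the class of $V$ in $H_A\subseteq\Phi$, which holds because linear parts factor through $\Phi$. We also need this determinant to be invariant under Reidemeister conjugation $\alpha\mapsto\gamma\alpha\widetilde F\gamma^{-1}\widetilde F^{-1}$, which changes $V_\alpha D$ by a conjugation. Finally, we must verify that the counting multiplicity in the averaging argument is the same $|H_A|$ for each essential class, both filtered and unfiltered. Proposition~\ref{prop:injection} supplies exactly this orbit-size bookkeeping.
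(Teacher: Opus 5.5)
Your architecture matches the paper's. You use Schirmer's formula, Proposition~\ref{prop:injection} to exclude fusion, the sector criterion that the ambient class of $\alpha\widetilde F$ is essential exactly when $\det(I-V_\alpha D)\neq0$, and conjugation-invariance of the filter. Computing $N(F,F|_A)$ and subtracting it is Corollary~\ref{cor:three-terms} read backwards.

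The genuine gap is that you average on $\Lambda\backslash\mathbb R^n$ and $\Lambda_A\backslash\widetilde A$. This presupposes $\varphi(\Lambda)\subseteq\Lambda$ and $\varphi(\Lambda_A)\subseteq\Lambda_A$. Otherwise the lifts $\alpha\widetilde F$ do not descend to these tori, and the torus Nielsen numbers you average are undefined. Equivariance only forces the linear part $W$ of $\varphi(t_\lambda)$ to satisfy $WD=D$. So invariance is automatic for invertible $D$, but it can fail for singular $D$, which the theorem allows. In the paper's last example (the dicosm times a circle, with $\widetilde F(x)=(x_4/2,0,0,0)$) one has $\varphi(t_4)=g\notin\Lambda$. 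If you replace the axis there by $\widetilde A=\operatorname{span}(e_1,e_4)$, you get $\Lambda_A=\langle t_1,t_4\rangle$ with $\varphi(t_4)=g\notin\Lambda_A$ as well.

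The missing ingredient is Lemma~\ref{lem:core}. Pass to the $\varphi$-invariant finite-index normal subgroup $L=\Lambda\cap\bigcap_{k\geq1}\varphi^{-k}(\Lambda)$ and to $L_A=L\cap\Pi_A$. Average over $Q=\Pi/L$ and $Q_A=\Pi_A/L_A$. Then collapse the constant-size fibers of $Q\to\Phi$ and $Q_A\to H_A$; this is legitimate because every determinant involved depends only on the linear part.

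Two further steps need different justification.

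First, Proposition~\ref{prop:injection} does not supply the orbit-size bookkeeping you attribute to it. It only says that restriction classes fusing into one essential ambient class are already twisted conjugate in $\Pi_A$. The global average for $N(F|_A)$ does not by itself localize to the sectors with $\det(I-VD)\neq0$. What does is the localized count \eqref{eq:kll-local} on $\widehat A_0\to A$: the mod-$L_A$ class indexed by $s=[\beta]\in\R(\bar\varphi_A)$ contains exactly $\frac{|s|}{|Q_A|}\left|\det\left(I-(V_\beta D)|_{A_0}\right)\right|$ essential classes of $F|_A$. Equivalently, an essential lift has a unique fixed point in $\widetilde A$, so by freeness its twisted stabilizer in $\Pi_A$ is trivial. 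Its class therefore splits into exactly $|Q_A|$ cover-level classes, all lying in sectors where your filter takes the same value.

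Second, your passage from ``a torus class is essential iff its determinant is nonzero'' to the corresponding statement for classes of $F$ on $M$ is immediate only in the nonzero case. There the class is a single fixed point of index $\pm1$. That a zero-determinant sector contains only inessential classes of $F$ is the input the paper takes from Remark~3.7 and the proof of Theorem~3.5 in \cite{KimLeeLee2005}.
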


Here and below a member of $\Phi$ or $H_A$ is represented by its orthogonal
linear part.  Since $V A_0=A_0$ for $V\in H_A$ and $D A_0\subseteq A_0$, the
restricted determinant in \eqref{eq:main-intro} is defined.  The first sum is
the usual averaging formula for $N(F)$.  The second sum counts the essential
restriction classes which lie in an inessential ambient sector.  Thus only
the ambient sectors with zero determinant contribute a relative correction.

When $\Phi=H_A=1$, Theorem~\ref{thm:intro} reduces to
\[
 N(F;M,A)=
 \begin{cases}
  |\det(I-D)|,&\det(I-D)\ne0,\\[2mm]
  |\det(I-D|_{A_0})|,&\det(I-D)=0.
 \end{cases}
\]
This is the flat, connected-submanifold case of Wong's nilmanifold-pair
formula \cite[Theorem~2.3(1)]{Wong1995}.  The new feature in
Theorem~\ref{thm:intro} is the finite-holonomy filter in the second sum.

The proof uses the averaging and mod-cover class results of Kim, Lee, and Lee
\cite{KimLeeLee2005}.  A map-dependent invariant lattice gives compatible
torus covers of $M$ and $A$.  The essential restriction classes are counted
on the connected torus cover of $A$ and then mapped directly to their ambient
sectors.  This avoids an auxiliary passage through the possibly disconnected
full inverse image of $A$.

In Section~2 we recall the relative Nielsen number and the required
finite-cover results.  Section~3 establishes the invariant covers and the
essential-class injection.  Theorem~\ref{thm:intro} is proved in Section~4.
Section~5 compares two flat overlap cases directly with \cite{Reite2008} and
then gives examples outside the model-solvmanifold class.  The invariant-axis
example shows that $H_A$ cannot be replaced by $\Hol(A)$, a four-dimensional
variant has $1\ne H_A\subsetneq\Phi$, and a product example has nonabelian
holonomy.  A final singular example illustrates the need for an invariant
sublattice of the translation lattice.

\section{Relative classes and finite covers}

Let $p:\widetilde X\to X$ be the universal cover of a compact connected
polyhedron, and let $\Pi$ be its group of covering transformations.  Fix a
lift $\widetilde f$ of a selfmap $f:X\to X$.  The relation
\[
 \widetilde f\alpha=\varphi(\alpha)\widetilde f,
 \qquad \alpha\in\Pi,
\]
defines an endomorphism $\varphi:\Pi\to\Pi$.  The fixed point classes of $f$
are represented by the lifts $\alpha\widetilde f$.  Two such lifts represent
the same class precisely when
\[
 \alpha'=\delta\alpha\varphi(\delta)^{-1}
\]
for some $\delta\in\Pi$.  We write $\R(\varphi)$ for the resulting set of
twisted conjugacy classes.

Suppose now that $f:(X,A)\to(X,A)$ is a map of pairs, with $A$ connected.
There is a function from the nonempty fixed point classes of $f|_A$ to the
fixed point classes of $f$, obtained by sending each restriction class to
the ambient class containing it.  Following Schirmer, let $N(f,f|_A)$ be the
number of essential ambient classes which contain an essential restriction
class.  Then
\begin{equation}\label{eq:schirmer}
 N(f;X,A)=N(f)+N(f|_A)-N(f,f|_A).
\end{equation}

The following elementary reformulation will be used in the proof of the main
theorem.

\begin{proposition}\label{prop:death-count}
Suppose that distinct essential fixed point classes of $f|_A$ have distinct
ambient images whenever that image is essential.  Then
\[
 \mathcal D_A(f)=
 \#\{\text{essential restriction classes with inessential ambient image}\}
\]
satisfies
\begin{equation}\label{eq:death-count}
 N(f;X,A)=N(f)+\mathcal D_A(f).
\end{equation}
\end{proposition}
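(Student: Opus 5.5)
The plan is to substitute the definition of $N(f,f|_A)$ into Schirmer's formula \eqref{eq:schirmer}. The reformulation \eqref{eq:death-count} is equivalent to the identity
\[
 N(f|_A)-N(f,f|_A)=\mathcal D_A(f),
\]
so it suffices to establish this identity by counting essential restriction classes according to their ambient images.

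Let $\mathcal E_A$ be the set of essential fixed point classes of $f|_A$. Each is nonempty, so it has a well-defined ambient image under the function recalled above, sending a restriction class to the ambient class containing it. I will split
\[
 \mathcal E_A=\mathcal E_A^{\mathrm{ess}}\sqcup\mathcal E_A^{\mathrm{iness}}
\]
according to whether the ambient image is essential or inessential. This gives $N(f|_A)=|\mathcal E_A^{\mathrm{ess}}|+|\mathcal E_A^{\mathrm{iness}}|$. By definition, $|\mathcal E_A^{\mathrm{iness}}|=\mathcal D_A(f)$.

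Next, consider the image map from $\mathcal E_A^{\mathrm{ess}}$ to essential ambient classes. Its image is exactly the set of essential ambient classes containing an essential restriction class, so the image has cardinality $N(f,f|_A)$. The hypothesis says that distinct essential restriction classes with essential ambient image have distinct images, so this map is injective. Hence it is a bijection onto its image, and $|\mathcal E_A^{\mathrm{ess}}|=N(f,f|_A)$.

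Combining the two counts gives $N(f|_A)=N(f,f|_A)+\mathcal D_A(f)$. Substituting this into \eqref{eq:schirmer} yields \eqref{eq:death-count}.

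There is no real obstacle in this argument. The only point needing care is that the correspondence between essential ambient classes and their contained essential restriction classes is one-to-one, and this is exactly what the injectivity hypothesis supplies. Without it, several restriction classes could fuse into one essential ambient class, and the equality $|\mathcal E_A^{\mathrm{ess}}|=N(f,f|_A)$ would become only an inequality $\ge$.
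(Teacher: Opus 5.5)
Your proposal is correct and follows the paper's own argument: under the injectivity hypothesis, $N(f,f|_A)$ equals the number of essential restriction classes with essential ambient image, and subtracting this from $N(f|_A)$ in \eqref{eq:schirmer} leaves $\mathcal D_A(f)$. You simply spell out the partition of the essential restriction classes and the bijection onto the common essential ambient classes, which the paper's one-line proof leaves implicit.
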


\begin{proof}
Under the hypothesis, $N(f,f|_A)$ is exactly the number of essential
restriction classes having essential ambient image.  Subtract this number
from $N(f|_A)$ in \eqref{eq:schirmer}.
\end{proof}

We next record the finite-cover facts needed below.  Let $K\triangleleft\Pi$
be a finite-index subgroup such that $\varphi(K)\subseteq K$, and put
$Q=\Pi/K$.  The map $f$ has a lift $\bar f$ to $K\backslash\widetilde X$.
When $X$ is an infra-nilmanifold and the cover is a compact nilmanifold, Kim,
Lee, and Lee prove
\begin{equation}\label{eq:kll-average}
 N(f)=\frac{1}{|Q|}\sum_{\beta\in Q}N(\beta\bar f).
\end{equation}
See \cite[Theorem~3.5]{KimLeeLee2005}.  If the cover is a torus and
$\beta\bar f$ is induced by an affine map with linear part $E_\beta$, then
\begin{equation}\label{eq:torus}
 N(\beta\bar f)=|\det(I-E_\beta)|;
\end{equation}
see \cite{BrooksBrownPakTaylor1975}.

There is also a localized form of \eqref{eq:kll-average}.  Let
$\bar\varphi:Q\to Q$ be the induced endomorphism and let
$s=[\beta]\in\R(\bar\varphi)$.  The mod-$K$ fixed point class indexed by $s$
is a union of ordinary fixed point classes downstairs.  If
$N(\beta\bar f)>0$, the number of essential ordinary classes in this union is
\begin{equation}\label{eq:kll-local}
 \frac{|s|}{|Q|}N(\beta\bar f).
\end{equation}
If $N(\beta\bar f)=0$, the ordinary classes in the mod-$K$ class are
inessential.  This is Proposition~3.9 and Remark~3.7 of
\cite{KimLeeLee2005}.  Notice that \eqref{eq:kll-local} counts fixed point
classes downstairs, not fixed points on the cover.

\section{Affine flat pairs}

Let $\Pi\subset\mathbb R^n\rtimes O(n)$ be a Bieberbach group.  We write its
elements as $\alpha=(a_\alpha,V_\alpha)$.  Its translation subgroup
$\Lambda$ is a full lattice in $\mathbb R^n$, and
$\Phi=\Pi/\Lambda$ is finite.  Let $\widetilde A=A_0+s$ be as in the
introduction.  The disjoint-or-equal condition on its $\Pi$-translates makes
$A=\Pi_A\backslash\widetilde A$ an embedded flat submanifold of
$M=\Pi\backslash\mathbb R^n$.

The intrinsic translation subgroup of $A$ is
\[
 T_A=\{\alpha\in\Pi_A:V_\alpha|_{A_0}=I\}.
\]
Consequently
\[
 \Hol(A)=\Pi_A/T_A,
\]
whereas the subgroup occurring in Theorem~\ref{thm:intro} is
\[
 H_A=\Pi_A/\Lambda_A,
 \qquad \Lambda_A=\Pi_A\cap\Lambda.
\]
The inclusion $\Lambda_A\subseteq T_A$ need not be an equality.  The quotient
$T_A/\Lambda_A$ can retain transverse linear information which is invisible
in $\Hol(A)$.

Every affine pair map has a lift preserving the selected component.

\begin{lemma}\label{lem:normalize}
Let $F:(M,A)\to(M,A)$ be affine and let $\widetilde F=(d,D)$ be an affine
lift.  There is a $\gamma\in\Pi$ such that
$\gamma^{-1}\widetilde F(\widetilde A)\subseteq\widetilde A$.  If two lifts
preserve $\widetilde A$, their linear parts differ by left multiplication by
an element of $H_A$.
\end{lemma}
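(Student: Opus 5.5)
The first claim is a statement about components of the preimage of $A$, and the second follows by comparing two lifts through the deck group. Let $p:\mathbb R^n\to M$ be the covering projection. Since $F(A)\subseteq A$, we have $p(\widetilde F(\widetilde A))\subseteq A=p(\widetilde A)$, so $\widetilde F(\widetilde A)$ lies in the full preimage
\[
 p^{-1}(A)=\bigcup_{\gamma\in\Pi}\gamma\widetilde A .
\]
By hypothesis the translates $\gamma\widetilde A$ are pairwise disjoint or equal. I will check that they form a locally finite family of closed affine subspaces, so that they are exactly the path components of $p^{-1}(A)$. Local finiteness follows from compactness of $A$ together with proper discontinuity of the $\Pi$-action: a compact neighbourhood in $\mathbb R^n$ meets only finitely many translates of a compact fundamental domain for $\Pi_A$ on $\widetilde A$.

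The image $\widetilde F(\widetilde A)$ is connected, being the image of an affine subspace under an affine map. It therefore lies in a single component $\gamma\widetilde A$. Applying $\gamma^{-1}$ gives $\gamma^{-1}\widetilde F(\widetilde A)\subseteq\widetilde A$, which is the first claim. The new lift $\gamma^{-1}\widetilde F$ is again affine, with linear part $V_\gamma^{-1}D$.

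For the second claim, let $\widetilde F_1$ and $\widetilde F_2$ be two lifts of $F$ that both preserve $\widetilde A$. Two lifts of the same map differ by a deck transformation, so $\widetilde F_2=\beta\widetilde F_1$ for a unique $\beta\in\Pi$. Choose $x\in\widetilde A$. Then $\widetilde F_1(x)\in\widetilde A$, and
\[
 \beta\widetilde F_1(x)=\widetilde F_2(x)\in\widetilde A,
\]
so $\beta\widetilde A\cap\widetilde A\neq\emptyset$. The disjoint-or-equal hypothesis forces $\beta\widetilde A=\widetilde A$, that is, $\beta\in\Pi_A$. The linear parts then satisfy $D_2=V_\beta D_1$. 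Finally, the map $\Pi_A\to\Phi$, $\beta\mapsto V_\beta$, has kernel $\Pi_A\cap\Lambda=\Lambda_A$, so it identifies $H_A=\Pi_A/\Lambda_A$ with its image in $\Phi$. Hence $V_\beta$ is an element of $H_A$ in the paper's convention of representing elements by their linear parts.

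The only point needing care is the identification of the components of $p^{-1}(A)$, specifically the local finiteness that makes the union of translates closed with those translates as its components. Everything else is formal.
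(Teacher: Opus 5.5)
Your proof is correct and follows essentially the same route as the paper. Compactness of $A$ and properness of the $\Pi$-action give local finiteness of the translates, so $\widetilde F(\widetilde A)$, being connected, lies in a single translate, and the disjoint-or-equal hypothesis puts the deck transformation relating two preserving lifts in $\Pi_A$. You also spell out two steps the paper leaves implicit: evaluating at a point $x\in\widetilde A$ to get $\beta\widetilde A\cap\widetilde A\neq\varnothing$, and identifying $V_\beta$ with an element of $H_A$ through the kernel $\Lambda_A$.
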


\begin{proof}
First note that the distinct translates of $\widetilde A$ are locally finite.
Indeed, compactness of $A$ gives a compact set $C\subset\widetilde A$ with
$\Pi_A C=\widetilde A$.  If a translate $\gamma\widetilde A$ meets a compact
set $K\subset\mathbb R^n$, then $\gamma\eta^{-1}C$ meets $K$ for some
$\eta\in\Pi_A$; properness of the $\Pi$-action leaves only finitely many such
elements $\gamma\eta^{-1}$, and hence only finitely many distinct translates.
The disjoint-or-equal translates are therefore open and closed in their union.
The connected set $\widetilde F(\widetilde A)$ consequently lies in one of
them, say $\gamma\widetilde A$.  Postcomposition by $\gamma^{-1}$ does not
change the map downstairs and gives the first assertion.  Two lifts of the
same map differ by postcomposition with a member of $\Pi$.  If both preserve
$\widetilde A$, the corresponding deck transformation belongs to $\Pi_A$.
\end{proof}

Changing a preserving lift multiplies its linear part by a member of $H_A$.
This merely permutes both sums in \eqref{eq:main-intro}.  Thus the expression
in that theorem is independent of the preserving lift.

The translation lattice need not be invariant under the endomorphism induced
by $\widetilde F$.  The following construction supplies the required
invariant subgroup.

\begin{lemma}\label{lem:core}
Let $\varphi:\Pi\to\Pi$ be an endomorphism.  Then
\begin{equation}\label{eq:core}
 L=\Lambda\cap\bigcap_{k\geq1}\varphi^{-k}(\Lambda)
\end{equation}
is a finite-index normal subgroup of $\Pi$, is contained in $\Lambda$, and
satisfies $\varphi(L)\subseteq L$.
\end{lemma}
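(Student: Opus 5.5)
The plan is to show that each subgroup in the intersection \eqref{eq:core} is normal of uniformly bounded index, and then use finite generation of $\Pi$ to reduce the infinite intersection to a finite one. Invariance under $\varphi$ will then follow from a shift of the index $k$.

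First, normality and a uniform index bound for each term. The lattice $\Lambda$ is normal in $\Pi$ with $[\Pi:\Lambda]=|\Phi|$. For each $k\ge1$, the composite $\Pi\xrightarrow{\varphi^k}\Pi\to\Pi/\Lambda=\Phi$ is a homomorphism with kernel $\varphi^{-k}(\Lambda)$. Hence $\varphi^{-k}(\Lambda)$ is normal in $\Pi$. Moreover $\Pi/\varphi^{-k}(\Lambda)$ embeds in $\Phi$, so $[\Pi:\varphi^{-k}(\Lambda)]\le|\Phi|$.

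Second, finiteness of the intersection. A Bieberbach group is finitely generated, since it is an extension of the finite group $\Phi$ by $\Lambda\cong\mathbb Z^n$. A subgroup of index at most $m$ in a finitely generated group is determined by the action on its coset space, that is, by a homomorphism to $S_m$. There are only finitely many such homomorphisms, so there are only finitely many subgroups of index $\le|\Phi|$. Consequently the family $\{\Lambda\}\cup\{\varphi^{-k}(\Lambda)\}_{k\ge1}$ contains only finitely many distinct members. Therefore $L$ is a finite intersection of finite-index normal subgroups, and so it is itself a finite-index normal subgroup. Since $\Lambda$ occurs in the intersection, $L\subseteq\Lambda$. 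This step is the only one that is not purely formal, and the counting argument above handles it. Alternatively, one could avoid counting by working in the finite quotient $\Phi$ and observing that the chain of intersections $\Lambda\cap\varphi^{-1}(\Lambda)\cap\dots\cap\varphi^{-k}(\Lambda)$ has indices bounded by $|\Phi|^{k+1}$. However, that bound grows with $k$, which is why I prefer the finite-generation argument.

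Third, $\varphi$-invariance. Let $x\in L$. Because $x\in\varphi^{-1}(\Lambda)$, we have $\varphi(x)\in\Lambda$. For every $k\ge1$, $\varphi^k(\varphi(x))=\varphi^{k+1}(x)\in\Lambda$, since $x\in\varphi^{-(k+1)}(\Lambda)$. Hence $\varphi(x)\in\varphi^{-k}(\Lambda)$ for all $k\ge1$, and therefore $\varphi(x)\in L$. This proves $\varphi(L)\subseteq L$.
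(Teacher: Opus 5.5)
Your proof is correct and follows essentially the same route as the paper. Both arguments obtain normality and the index bound $|\Phi|$ from the composite $\Pi\xrightarrow{\varphi^k}\Pi\to\Phi$, use finite generation of $\Pi$ to reduce $L$ to a finite intersection, and prove $\varphi(L)\subseteq L$ by shifting the index $k$. The only difference is that you also justify why there are finitely many subgroups of bounded index, via homomorphisms to $S_m$, which the paper takes as known.
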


\begin{proof}
For every $k\geq0$, the subgroup $\varphi^{-k}(\Lambda)$ is normal and
\[
 [\Pi:\varphi^{-k}(\Lambda)]
 =|\im(\Pi\xrightarrow{\varphi^k}\Pi\longrightarrow\Pi/\Lambda)|
 \leq |\Phi|.
\]
Since $\Pi$ is finitely generated, it has only finitely many subgroups of
bounded index.  Hence the intersection in \eqref{eq:core} is a finite
intersection of finite-index normal subgroups.  It is therefore finite-index
and normal, and it is contained in $\Lambda$ by definition.

If $x\in L$, then $\varphi^j(x)\in\Lambda$ for every $j\geq0$.  The same
statement for $\varphi(x)$ proves that $\varphi(x)\in L$.
\end{proof}

If $D$ is invertible, equivariance gives
$\varphi(t_\lambda)=t_{D\lambda}$ for every $\lambda\in\Lambda$.  Thus the
full translation lattice is already $\varphi$-invariant, and one may take
$L=\Lambda$.  The core construction is needed only when $D$ is singular.

Set
\begin{equation}\label{eq:cover-groups}
 S=L\backslash\mathbb R^n,
 \quad Q=\Pi/L,
 \quad L_A=L\cap\Pi_A=L\cap\Lambda_A,
 \quad Q_A=\Pi_A/L_A.
\end{equation}
The endomorphism $\varphi$ preserves $\Pi_A$.  Indeed, for
$\alpha\in\Pi_A$, equivariance gives
\[
 \varphi(\alpha)\widetilde F(\widetilde A)
 =\widetilde F\alpha(\widetilde A)
 =\widetilde F(\widetilde A).
\]
The nonempty set on the right lies in both $\widetilde A$ and
$\varphi(\alpha)\widetilde A$.  The disjoint-or-equal hypothesis therefore
gives $\varphi(\alpha)\in\Pi_A$.
The space $S$ is a torus.  Compactness of $A$ says that $\Pi_A$ acts
cocompactly on $\widetilde A$.  Since $L_A$ has finite index in $\Pi_A$ and
consists of translations, it also acts cocompactly and is a full lattice in
$A_0$.  Thus
\[
 \widehat A_0=L_A\backslash\widetilde A
\]
is also a torus.  Moreover, $L_A\triangleleft\Pi_A$ and
$\varphi(L_A)\subseteq L_A$.  The equality $L_A=L\cap\Pi_A$ gives a natural
injection
\begin{equation}\label{eq:sector-injection}
 \jmath:Q_A\longrightarrow Q.
\end{equation}

The next proposition is the fixed-class injection required by
Proposition~\ref{prop:death-count}.

\begin{proposition}\label{prop:injection}
Let $F:(M,A)\to(M,A)$ be affine and choose a lift $\widetilde F$ preserving
$\widetilde A$.  Distinct essential fixed point classes of $F|_A$ have
distinct ambient images whenever the ambient image is essential.
\end{proposition}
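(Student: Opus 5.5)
The plan is to work directly with lifts and the disjoint-or-equal hypothesis, using the affine structure to show that an essential ambient class has a \emph{single} fixed point upstairs. The finite-cover results of Section~2 are used only to decide which ambient classes are essential.

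First I fix notation. Fixed point classes of $F$ are the nonempty sets $p(\operatorname{Fix}(\alpha\widetilde F))$ with $\alpha\in\Pi$. Since $\widetilde F$ preserves $\widetilde A$ and $\varphi(\Pi_A)\subseteq\Pi_A$, the restriction $\widetilde F|_{\widetilde A}$ is a lift of $F|_A$ to the cover $\widetilde A\to A$, whose deck group is $\Pi_A$. Hence the fixed point classes of $F|_A$ are the nonempty sets $p(\operatorname{Fix}(\alpha\widetilde F)\cap\widetilde A)$ with $\alpha\in\Pi_A$. Two restriction classes coincide exactly when the indices are twisted conjugate by an element of $\Pi_A$. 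The ambient image of the restriction class indexed by $\alpha$ is the ambient class indexed by the same $\alpha$, now regarded in $\R(\varphi)$. Every essential class is nonempty, so an essential restriction class indexed by $\alpha$ provides a point $x\in\widetilde A$ with $\alpha\widetilde F(x)=x$.

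Next I characterize the essential ambient classes. Take the invariant lattice $L$ of Lemma~\ref{lem:core} and the torus cover $S=L\backslash\mathbb R^n$. By \eqref{eq:kll-local} and the remark following it, the ambient class indexed by $\alpha$ can be essential only if $N(\beta\bar f)>0$, where $\beta=[\alpha]\in Q$. The map $\beta\bar f$ is induced by the affine map $\alpha\widetilde F$, whose linear part is $V_\alpha D$. So \eqref{eq:torus} gives $\det(I-V_\alpha D)\neq0$. Consequently the affine map $\alpha\widetilde F$ has exactly one fixed point in $\mathbb R^n$, and $\operatorname{Fix}(\alpha\widetilde F)$ is a singleton.

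Now suppose that essential restriction classes indexed by $\alpha,\alpha'\in\Pi_A$ have the same essential ambient image. Then $\alpha'=\delta\alpha\varphi(\delta)^{-1}$ for some $\delta\in\Pi$, and
\[
 \alpha'\widetilde F=\delta\,(\alpha\widetilde F)\,\delta^{-1},
 \qquad\text{so}\qquad
 \operatorname{Fix}(\alpha'\widetilde F)=\delta\operatorname{Fix}(\alpha\widetilde F).
\]
By the previous paragraph both fixed sets are singletons, say $\{x\}$ and $\{\delta x\}$. Essentiality of the two restriction classes forces $x\in\widetilde A$ and $\delta x\in\widetilde A$. Thus $\delta\widetilde A$ meets $\widetilde A$, and the disjoint-or-equal hypothesis gives $\delta\in\Pi_A$. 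Hence $\alpha$ and $\alpha'$ are twisted conjugate within $\Pi_A$, so the two restriction classes coincide. This proves the injectivity.

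The main obstacle is the second step: certifying that an essential ambient class is indexed by a lift with $\det(I-V_\alpha D)\neq0$. This requires matching the mod-$L$ sector of $\alpha$ with the torus lift $\beta\bar f$ and reading off its linear part. I would handle it by noting that $\alpha\widetilde F$ is itself an affine lift of $\beta\bar f$ to $\mathbb R^n$. The linear part of $\beta\bar f$ is therefore $V_\alpha D$, and the KLL statement that ordinary classes in a mod-$K$ class with $N(\beta\bar f)=0$ are inessential applies directly.
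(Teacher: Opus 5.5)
Your proof is correct and follows the paper's argument. Both use the KLL inessentiality statement to get a nonzero torus-sector determinant, hence a unique fixed point of the lift. The twisted conjugacy then moves that point by $\delta$, and the disjoint-or-equal hypothesis forces $\delta\in\Pi_A$.

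The one local difference is how you place the fixed point in $\widetilde A$. You use nonemptiness of the essential restriction classes. The paper instead factors the determinant over $A_0$ and $\mathbb R^n/A_0$, so its argument does not need the restriction classes to be essential.
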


\begin{proof}
Let $\varphi:\Pi\to\Pi$ be induced by $\widetilde F$.  Suppose
$a,c\in\Pi_A$ represent two restriction classes which have the same ambient
class.  Then
\[
 a=\delta c\varphi(\delta)^{-1}
\]
for some $\delta\in\Pi$, and equivariance gives
\begin{equation}\label{eq:affine-conjugacy}
 a\widetilde F=\delta(c\widetilde F)\delta^{-1}.
\end{equation}

Suppose that the ambient class is essential.  The corresponding torus-sector
determinant is nonzero by \cite[Remark~3.7]{KimLeeLee2005}, so
$c\widetilde F$ has a unique fixed point $x_0$ in $\mathbb R^n$.  Since
$c\widetilde F$ preserves $\widetilde A$, its linear part preserves $A_0$.
The determinant factors over $A_0$ and
$\mathbb R^n/A_0$; hence the restricted determinant is also nonzero.  The
restriction of $c\widetilde F$ to $\widetilde A$ therefore has a fixed point,
which must be $x_0$.  Thus $x_0\in\widetilde A$.

By \eqref{eq:affine-conjugacy}, the unique fixed point of
$a\widetilde F$ is $\delta x_0$.  The same argument puts this point in
$\widetilde A$, while $x_0\in\widetilde A$ gives
$\delta x_0\in\delta\widetilde A$.  Hence
\[
 \widetilde A\cap\delta\widetilde A\ne\varnothing.
\]
The translates are disjoint or equal, so $\delta\widetilde A=\widetilde A$
and $\delta\in\Pi_A$.  The two classes are therefore already twisted
conjugate in $\Pi_A$.
\end{proof}

\begin{remark}
Neither the invertibility of $D$ nor invariance of $A_0$ under the full
holonomy group is used in Proposition~\ref{prop:injection}.  Only the
particular affine maps $a\widetilde F$ and $c\widetilde F$ must preserve the
selected affine subspace.
\end{remark}

\section{The determinant formula}

We now prove Theorem~\ref{thm:intro}.  The proof separates the absolute
ambient term from the restriction classes whose ambient sectors are
inessential.

\begin{proof}[Proof of Theorem~\ref{thm:intro}]
The preserving lift in the statement is supplied by
Lemma~\ref{lem:normalize}.  Let $\varphi$ be the induced endomorphism.  By
Lemma~\ref{lem:core}, we may choose $L$, $Q$, $L_A$, and $Q_A$ as in
\eqref{eq:core} and \eqref{eq:cover-groups}.  Let $\bar F:S\to S$ be the
induced map on the torus cover, and let
$\bar f:\widehat A_0\to\widehat A_0$ be its restriction.  Denote by
$\bar\varphi_A:Q_A\to Q_A$ the endomorphism induced by $\varphi|_{\Pi_A}$.
By \eqref{eq:kll-average} and \eqref{eq:torus},
\begin{equation}\label{eq:ambient-Q}
 N(F)=\frac{1}{|Q|}\sum_{\alpha\in Q}
       |\det(I-V_\alpha D)|.
\end{equation}
Every fiber of $Q\to\Phi$ has $[\Lambda:L]$ elements, and the determinant
depends only on the linear part.  Therefore
\begin{equation}\label{eq:ambient-Phi}
 N(F)=\frac{1}{|\Phi|}\sum_{V\in\Phi}|\det(I-VD)|.
\end{equation}

Put $f=F|_A$.  Proposition~\ref{prop:injection} and
Proposition~\ref{prop:death-count} give
\begin{equation}\label{eq:relative-death}
 N(F;M,A)=N(F)+\mathcal D_A(F).
\end{equation}

Apply \eqref{eq:kll-local} to the regular torus cover
$\widehat A_0\to A$.  If $s=[\beta]\in\R(\bar\varphi_A)$ and
\[
 n_A(\beta)=
 \left|\det\left(I-(V_\beta D)|_{A_0}\right)\right|>0,
\]
then $N(\beta\bar f)=n_A(\beta)$, and the mod-$L_A$ class indexed by $s$
contains
\begin{equation}\label{eq:localized-A}
 \frac{|s|}{|Q_A|}n_A(\beta)
\end{equation}
ordinary essential fixed point classes of $f$.  When $n_A(\beta)=0$, that
mod-cover class contains no essential ordinary class.

The injection $\jmath:Q_A\to Q$ identifies the ambient sector of every class
counted in \eqref{eq:localized-A}.  Indeed, the lift representatives in the
restriction sector have the form $k\beta$ with $k\in L_A\subset L$, and
hence all map to the twisted class of $\jmath(\beta)$ in
$\R(\bar\varphi)$.  Twisted conjugate elements of $Q_A$ map to twisted
conjugate elements of $Q$.  Remark~3.7 and the proof of Corollary~3.8 in
\cite{KimLeeLee2005} show that an essential mod-$L$ class consists entirely of
essential ordinary classes; the proof of their Theorem~3.5 shows that when the
covering Nielsen number is zero the corresponding ordinary classes are
inessential.  Together with the torus formula \eqref{eq:torus}, this says that
all ordinary ambient classes in the present sector are essential if
\[
 N(\jmath(\beta)\bar F)=|\det(I-V_\beta D)|>0,
\]
and they are inessential if this determinant vanishes.  Thus the classes in
\eqref{eq:localized-A} have inessential ambient image exactly when
$\det(I-V_\beta D)=0$.

Both determinant conditions are constant on the twisted class of $\beta$.
Indeed, equivariance gives
$DV_\delta=V_{\varphi(\delta)}D$.  Consequently, if
$\beta'=\delta\beta\bar\varphi_A(\delta)^{-1}$, then
$V_{\beta'}D=V_\delta(V_\beta D)V_\delta^{-1}$, and the same conjugacy holds
after restriction to $A_0$.  This argument does not require $D$ to be
invertible.

Summing over the twisted classes with inessential ambient sector, and then
replacing the class sum by the corresponding element sum, gives
\begin{equation}\label{eq:dead-QA}
 \mathcal D_A(F)=\frac{1}{|Q_A|}
   \sum_{\substack{\beta\in Q_A\\\det(I-V_\beta D)=0}}
   \left|\det\left(I-(V_\beta D)|_{A_0}\right)\right|.
\end{equation}
The projection $Q_A\to H_A$ has fibers of size
$[\Lambda_A:L_A]$.  Both determinants in \eqref{eq:dead-QA} depend only on
the full ambient linear part.  Hence \eqref{eq:dead-QA} becomes
\begin{equation}\label{eq:dead-HA}
 \frac{1}{|H_A|}
 \sum_{\substack{V\in H_A\\\det(I-VD)=0}}
 \left|\det\left(I-(VD)|_{A_0}\right)\right|.
\end{equation}
Combining \eqref{eq:ambient-Phi}, \eqref{eq:relative-death}, and
\eqref{eq:dead-HA} proves the theorem.
\end{proof}

The same argument gives the absolute restriction and common terms.

\begin{corollary}\label{cor:three-terms}
With the hypotheses of Theorem~\ref{thm:intro},
\begin{align*}
 N(F|_A)
 &=\frac{1}{|H_A|}\sum_{V\in H_A}
   \left|\det\left(I-(VD)|_{A_0}\right)\right|,\\
 N(F,F|_A)
 &=\frac{1}{|H_A|}
   \sum_{\substack{V\in H_A\\\det(I-VD)\ne0}}
   \left|\det\left(I-(VD)|_{A_0}\right)\right|.
\end{align*}
\end{corollary}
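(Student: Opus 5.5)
The plan is to reuse the machinery set up in the proof of Theorem~\ref{thm:intro}, applied only to the restriction $f=F|_A$ and the torus cover $\widehat A_0\to A$. For $N(F|_A)$ I would apply the averaging formula \eqref{eq:kll-average} with $X=A$, $K=L_A$ and quotient $Q_A$. This is legitimate because $A$ is a compact flat manifold, $L_A\triangleleft\Pi_A$ has finite index, and $\varphi(L_A)\subseteq L_A$, all of which were established in Section~3. The lift $\beta\bar f$ on the torus $\widehat A_0$ is induced by the affine map $\beta\widetilde F|_{\widetilde A}$, whose linear part is $(V_\beta D)|_{A_0}$. Then \eqref{eq:torus} gives $N(\beta\bar f)=|\det(I-(V_\beta D)|_{A_0})|$. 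Averaging over $Q_A$ and collapsing the fibres of $Q_A\to H_A$, each of which has $[\Lambda_A:L_A]$ elements, yields the first displayed formula. The collapse is valid because the determinant depends only on the ambient linear part, exactly as in the passage from \eqref{eq:dead-QA} to \eqref{eq:dead-HA}.

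For $N(F,F|_A)$ I would use the identity $N(F|_A)=N(F,F|_A)+\mathcal D_A(F)$. This identity holds because, by Proposition~\ref{prop:injection}, each essential restriction class falls into exactly one of two groups:
\begin{itemize}
\item[(a)] it has essential ambient image, and distinct classes of this kind have distinct images, so they are counted once each in $N(F,F|_A)$;
\item[(b)] it has inessential ambient image, and these are counted by $\mathcal D_A(F)$.
\end{itemize}
This is precisely the computation behind Proposition~\ref{prop:death-count}. Subtracting the expression \eqref{eq:dead-HA} for $\mathcal D_A(F)$ from the first formula leaves the sum over $V\in H_A$ with $\det(I-VD)\ne0$, which is the second formula.

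Alternatively, one can obtain the second formula directly, by the localized count \eqref{eq:localized-A} summed over twisted classes whose ambient sector has $\det(I-V_\beta D)\neq 0$. This mirrors \eqref{eq:dead-QA} with the filter reversed. It also makes visible that such sectors automatically have nonzero restricted determinant, since the determinant factors over $A_0$ and $\mathbb R^n/A_0$.

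The only step I expect to need care is justifying the use of \eqref{eq:kll-average} and \eqref{eq:torus} for $A$ itself. Specifically, one must check that $\Pi_A$ acts on $\widetilde A$ as a Bieberbach group, namely freely, properly and cocompactly. One must also check that $\Pi_A\to\operatorname{Isom}(\widetilde A)$ may fail to be injective (via $T_A$-type elements acting trivially), which does not affect the argument. On the second point, the relevant covering group is the image of $\Pi_A$, and averaging over $Q_A$ instead of its image only repeats terms uniformly. Since the proof of the theorem already invoked \eqref{eq:kll-local} for this same cover, I would simply note that the same justification applies.
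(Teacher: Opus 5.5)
Your proposal is correct and is the argument the paper abbreviates as ``the same argument'': averaging on the torus cover $\widehat A_0\to A$ gives $N(F|_A)$, and Proposition~\ref{prop:injection} turns $N(F,F|_A)$ into $N(F|_A)-\mathcal D_A(F)$, which is the live-sector part of that average. The one correction concerns your last paragraph: because $\Pi$ acts freely on $\mathbb R^n$, no nontrivial element of $\Pi_A$ can act trivially on $\widetilde A$ (elements of $T_A$ act by nonzero translations), so $\Pi_A\to\operatorname{Isom}(\widetilde A)$ is always injective, $\Pi_A$ itself is the Bieberbach group of $A$, and that worry is moot.
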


The preceding formulas give a direct analogue of the criterion in
\cite[Corollary~4.4]{Reite2008} for when Schirmer theory is interesting.

\begin{corollary}\label{cor:interesting}
With the hypotheses of Theorem~\ref{thm:intro},
\[
 N(F;M,A)>\max\{N(F),N(F|_A)\}
\]
if and only if both of the following conditions hold:
\begin{enumerate}
\item there is a $V\in H_A$ such that
\[
 \det(I-VD)=0
 \quad\text{and}\quad
 \det\left(I-(VD)|_{A_0}\right)\ne0;
\]
\item
\[
 N(F)>
 \frac{1}{|H_A|}
 \sum_{\substack{V\in H_A\\\det(I-VD)\ne0}}
 \left|\det\left(I-(VD)|_{A_0}\right)\right|.
\]
\end{enumerate}
\end{corollary}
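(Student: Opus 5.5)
We prove Corollary~\ref{cor:interesting} by combining Theorem~\ref{thm:intro} with Corollary~\ref{cor:three-terms}. Introduce the abbreviations
\[
 a=\frac{1}{|H_A|}\sum_{\substack{V\in H_A\\ \det(I-VD)=0}}\left|\det\left(I-(VD)|_{A_0}\right)\right|,
 \qquad
 b=\frac{1}{|H_A|}\sum_{\substack{V\in H_A\\ \det(I-VD)\ne0}}\left|\det\left(I-(VD)|_{A_0}\right)\right|.
\]
By Theorem~\ref{thm:intro} we have $N(F;M,A)=N(F)+a$. Corollary~\ref{cor:three-terms} gives $N(F|_A)=a+b$ and $N(F,F|_A)=b$. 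Both $a$ and $b$ are sums of nonnegative terms, and by Proposition~\ref{prop:death-count} they are integers: $a=\mathcal D_A(F)$ and $b=N(F,F|_A)$.

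The inequality $N(F;M,A)>\max\{N(F),N(F|_A)\}$ splits into two separate inequalities. The first is $N(F)+a>N(F)$, which is equivalent to $a>0$. Since $a$ is a nonnegative sum, $a>0$ holds exactly when some summand is nonzero, that is, when there is a $V\in H_A$ with $\det(I-VD)=0$ and $\det(I-(VD)|_{A_0})\ne0$. This is condition~(1). The second is $N(F)+a>a+b$, which is equivalent to $N(F)>b$. This is condition~(2). The conjunction of the two inequalities is the claimed equivalence.

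There is essentially no obstacle. The only point to check is that the formula $N(F|_A)=a+b$ in Corollary~\ref{cor:three-terms} comes from the same class-by-class count as the theorem. There, the essential restriction classes are partitioned according to whether their ambient sector determinant vanishes. The injection of Proposition~\ref{prop:injection} makes $b$ equal to $N(F,F|_A)$. Together these yield Schirmer's relation \eqref{eq:schirmer} consistently.
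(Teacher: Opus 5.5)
Your proof is correct and is essentially the paper's argument: the first inequality reduces to positivity of the nonnegative correction term $a$, which is condition~(1), and the second reduces to $N(F)>N(F,F|_A)=b$, which the paper reads off directly from Schirmer's formula \eqref{eq:schirmer} while you get it by subtracting $N(F|_A)=a+b$ from Corollary~\ref{cor:three-terms}. The integrality remark is not needed, and the identification $a=\mathcal D_A(F)$ comes from the proof of Theorem~\ref{thm:intro} rather than from Proposition~\ref{prop:death-count} alone.
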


\begin{proof}
By Theorem~\ref{thm:intro}, the strict inequality
$N(F;M,A)>N(F)$ holds exactly when the nonnegative correction term is
positive, which is condition~(1).  By \eqref{eq:schirmer}, the inequality
$N(F;M,A)>N(F|_A)$ is equivalent to $N(F)>N(F,F|_A)$; Corollary
\ref{cor:three-terms} identifies the latter term with the sum in condition
(2).
\end{proof}

\begin{remark}\label{rem:zero-ambient}
If $N(F)=0$, every summand in the nonnegative ambient average vanishes.  The
common term in Corollary~\ref{cor:three-terms} is then zero, and
$N(F;M,A)=N(F|_A)$.  This recovers Schirmer's general observation
\cite[Theorem~2.5(i)]{Schirmer1986} and, on the flat model-pair overlap,
case~(i) of \cite[Theorem~4.5]{Reite2008}.
\end{remark}

\begin{corollary}\label{cor:minimum}
Suppose, in addition, that $\dim M\geq3$ and $\codim A\geq2$.  Then
\[
 MF[F;M,A]=N(F;M,A),
\]
and the common value is given by \eqref{eq:main-intro}.
\end{corollary}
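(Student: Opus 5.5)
The plan is to invoke Schirmer's relative minimum theorem rather than construct fixed-point-minimizing maps by hand. Schirmer proved that for a compact polyhedral pair $(X,A)$ in which $A$ can be by-passed in $X$ and $X\setminus A$ is not a $2$-manifold (together with dimension conditions: $\dim X\ge 3$, and $\dim A\ge 3$ or a weaker replacement handled separately), one has $MF[f;X,A]=N(f;X,A)$ for every selfmap of pairs. The first step is therefore to check that a compact flat manifold pair $(M,A)$ with $\dim M\ge3$ and $\codim A\ge2$ satisfies these hypotheses. Compact flat manifolds and their embedded flat submanifolds are smooth, hence triangulable as polyhedral pairs with $A$ a subcomplex. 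Since $M$ is a closed manifold of dimension at least $3$, $M\setminus A$ is not a surface. The bypass condition, namely that every path in $M$ with endpoints off $A$ is homotopic rel endpoints to one avoiding $A$, follows from general position because $\codim A\ge2$. Connectivity of $M\setminus A$ follows the same way.

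The step I expect to be the real obstacle is the dimension hypothesis on $A$ itself. Schirmer's theorem as usually stated also requires the restriction $f|_A$ to be deformable to realize $N(f|_A)$, which holds if $\dim A\ge3$, by Wecken's theorem. When $\dim A\le2$, I would argue directly. Here $A$ is a flat manifold of dimension $0$, $1$ or $2$: a point, a circle, a torus or a Klein bottle. For these, $MF[f|_A]=N(f|_A)$ is known. Points are trivial. For the circle it is elementary. For the torus and the Klein bottle it holds by the Wecken property of surfaces with $\chi=0$ (Jiang, and Kelly for the Klein bottle). So I would cite the version of Schirmer's theorem whose hypothesis is exactly that $MF[f|_A]=N(f|_A)$ for maps homotopic to $f|_A$, together with the bypass and dimension conditions on $X$. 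That version covers all cases once the Wecken property of $A$ is granted. If only the $\dim A\ge3$ version is available, the low-dimensional cases would instead be handled by first minimizing on $A$ using the surface Wecken results, then extending by a homotopy of pairs via the homotopy extension property, and finally running Schirmer's ambient removal argument in $M\setminus A$.

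With $MF[F;M,A]=N(F;M,A)$ established, the final assertion is immediate: Theorem~\ref{thm:intro} evaluates $N(F;M,A)$ by \eqref{eq:main-intro}. No further computation is needed. The affine structure of $F$ enters only through Theorem~\ref{thm:intro}, since the minimum theorem is a homotopy statement applying to all maps of pairs.
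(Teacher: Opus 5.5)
Your proposal is correct and follows essentially the paper's argument: apply Schirmer's Minimum Theorem \cite[Theorem~6.2]{Schirmer1986}, check triangulability, that $M-A$ is a manifold of dimension at least three, bypassability by general position from $\codim A\geq2$, and the Nielsen-space (Wecken) property of $A$ by the same case split into a point, a circle, a torus or Klein bottle, and dimension at least three, and then read off the value from Theorem~\ref{thm:intro}. The only tightening needed is to state Schirmer's hypotheses exactly: $M$ is connected; $M-A$ has no local cut point and is not a $2$-manifold, and the local-cut-point condition is immediate for a manifold of dimension $\geq3$; each component of $A$ is a Nielsen space; and $A$ can be bypassed. Stated this way, your fallback plan is unnecessary.
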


\begin{proof}
Schirmer's four hypotheses are that the ambient polyhedron is connected, its
complement has no local cut point and is not a $2$-manifold, every component
of the subpolyhedron is a Nielsen space, and the subpolyhedron can be
bypassed \cite[Theorem~6.2]{Schirmer1986}.  We verify them below.

The quotient $M$ is connected.  The compact embedded submanifold $A$ is
closed, and a compatible triangulation makes $(M,A)$ a pair of compact
polyhedra.  Since $M-A$ is a manifold of dimension at least three, it has no
local cut point and is not a $2$-manifold.  General position shows that the
locally flat submanifold $A$ can be bypassed when its codimension is at least
two.

It remains to verify the Nielsen-space hypothesis.  The manifold $A$ is
connected.  If $\dim A\geq2$, then it has no local cut point and is not a
surface of negative Euler characteristic; in dimension two it is a torus or
a Klein bottle.  Jiang's Nielsen-space criterion therefore applies
\cite{Jiang1980}; see also Schirmer's summary
\cite[p.~466]{Schirmer1986}.  For the torus, see also
\cite{BrooksBrownPakTaylor1975}.  If $A=S^1$, a degree-$m$ map with $m\ne1$
is homotopic to $z\mapsto z^m$, which has $|1-m|$ fixed points, and conjugating
by a circle homeomorphism places them at any prescribed distinct points.  A
nontrivial rotation realizes the fixed-point-free degree-one case.  A
zero-dimensional connected $A$ is a point and is immediate.  Thus every
component of $A$ is a Nielsen space, and the Minimum Theorem applies.
\end{proof}

\section{Examples}

We say that a pair map has \emph{interesting Schirmer theory} if
\[
 N(F;M,A)>\max\{N(F),N(F|_A)\}.
\]
The following examples illustrate both the correction term and the role of
the ambient stabilizer holonomy.

\subsection{An invariant axis}

Let $\Pi$ be the orientable three-dimensional Bieberbach group of the
half-turn manifold, also called the dicosm or $\mathcal G_2$, generated by
the translations $t_1,t_2,t_3$ and
\[
 g=\left(\tfrac12e_1,V\right),
 \qquad V=\operatorname{diag}(1,-1,-1),
 \qquad g^2=t_1.
\]
Put $M=\Pi\backslash\mathbb R^3$ and
$\widetilde A=\mathbb R e_1$.  Then
\[
 \Pi_A=\langle g\rangle,
 \qquad \Lambda_A=\langle g^2\rangle,
 \qquad H_A\cong\mathbb Z/2.
\]
However, $g$ restricts to translation by $e_1/2$ on the axis.  Thus
$T_A=\Pi_A$, $A$ is a circle, and $\Hol(A)=1$.

Consider the affine map with preserving lift $\widetilde F(x)=Dx$, where
\[
 D=\operatorname{diag}(-1,1,1).
\]
It induces $g\mapsto g^{-1}$ and therefore descends to $M$.
The two ambient determinants are
\[
 \det(I-D)=0,
 \qquad \det(I-VD)=8,
\]
so $N(F)=4$.  Both restricted determinants equal $2$, and hence
$N(F|_A)=2$.  Only the identity ambient sector is inessential.  Theorem
\ref{thm:intro} gives
\[
 N(F;M,A)=4+\frac12(2)=5.
\]
Equivalently, the two restriction classes are represented by the even and
odd powers of $g$; the even class maps to the inessential identity sector and
the odd class maps to the essential $V$-sector.  Thus $N(F,F|_A)=1$.

If $H_A$ were incorrectly replaced by the intrinsic holonomy of the circle,
the formula would give $4+2=6$.  This example therefore detects the transverse
ambient information in $H_A$.  Corollary~\ref{cor:minimum} also gives
\[
 MF[F;M,A]=5>4=\max\{N(F),N(F|_A)\}.
\]

This pair is also the flat model pair covered by
\cite[Corollary~4.2]{Reite2008}: in the notation of that paper, the dicosm is
$\mathbb Z^2\rtimes_{-I_2}\mathbb Z$, the submanifold is the canonical base
circle, and the map has $X=I_2$ and $Y=-1$.  Hence
$\operatorname{Coker}(I-Y)=\mathbb Z/2$.  Its even term has
$\det(I-X)=0$ and contributes the correction $1$, while its odd term has
$|\det(I+X)|=4$ and contributes to the ambient number.  The earlier formula
therefore also gives $4+1=5$.  In this coordinate description, the old
$\delta$-filter already uses the ambient matrix $(-I_2)^bX$ rather than data
intrinsic to the base circle.  Theorem~\ref{thm:intro} identifies the group
carrying that filter as $H_A$ and does so without model-solvmanifold
coordinates.

\subsection{A second model-pair cross-check}

There is a second useful comparison with \cite[Example~4.1]{Reite2008}.  Take
the flat model solvmanifold
\[
 \Pi=\mathbb Z^3\rtimes_{-I_3}\mathbb Z,
 \qquad
 \Lambda=\mathbb Z^3\times2\mathbb Z,
\]
acting on $\mathbb R^3\times\mathbb R$ by
$(u,k)\cdot(x,t)=(u+(-I_3)^k x,t+k)$.  Its holonomy has representatives
$I$ and $V=\operatorname{diag}(-I_3,1)$.  Set
$\widetilde A=A_0=\operatorname{span}(e_3,e_4)$ and let $F$ be induced by
the preserving lift $\widetilde F(z)=Dz$, where
\[
 D=X\oplus7,
 \qquad
 X=\begin{pmatrix}
 1&0&0\\
 0&1&0\\
 1&0&-5
 \end{pmatrix}.
\]
Then $A$ is a Klein bottle and
$H_A=\Hol(A)=\mathbb Z/2$.  The ambient sector determinants are
\[
 |\det(I-D)|=0,
 \qquad
 |\det(I-VD)|=96,
\]
so $N(F)=48$.  On $A_0$, the corresponding determinants are $36$ and $24$,
respectively.  Corollary~\ref{cor:three-terms} and
Theorem~\ref{thm:intro} therefore give
\[
 N(F|_A)=\frac{36+24}{2}=30,
 \qquad
 N(F;M,A)=48+\frac{36}{2}=66.
\]
These are exactly the values obtained from the independently proved model-pair
formula in \cite[Example~4.1]{Reite2008}.  Thus the two theories agree in a
nontrivial flat overlap with both a live and a dead ambient sector.

\subsection{A four-dimensional orientable pair}

Let $\Lambda=\mathbb Z^4$ and let $\Pi$ be generated by $\Lambda$ and the
glides
\[
 g_B=\left((\tfrac12,0,0,0),B\right),
 \qquad
 g_C=\left((0,\tfrac12,0,0),C\right),
\]
where
\[
 B=\operatorname{diag}(1,-1,1,-1),
 \qquad
 C=\operatorname{diag}(1,1,-1,-1).
\]
Then $M=\Pi\backslash\mathbb R^4$ is orientable and has holonomy
$\Phi=\{I,B,C,BC\}$.  Take
\[
 \widetilde A=\operatorname{span}(e_1,e_2)
 +(0,0,\tfrac14,\tfrac13).
\]
Its stabilizer consists of the translations in
$\operatorname{span}(e_1,e_2)$, so $A$ is a $2$-torus and $H_A=1$.

Let $F$ have preserving lift $\widetilde F(x)=Dx$, where
\[
 D=\operatorname{diag}(-1,-1,1,1).
\]
This map normalizes the generators above and preserves $\widetilde A$.
The ambient sector determinants, ordered as $I,B,C,BC$, are
\[
 0,\quad0,\quad16,\quad0.
\]
Therefore $N(F)=4$.  The restriction determinant is $4$, and its ambient
identity sector is inessential.  It follows that
\[
 N(F;M,A)=4+4=8.
\]
This example already lies outside the model-solvmanifold class of
\cite{HeathKeppelmann2002,Reite2008}.  Indeed, if its fundamental group had a
model presentation $\mathbb Z^n\rtimes_C\mathbb Z^p$, projection to
$\mathbb Z^p$ would give $b_1(M)\geq p$.  But
\[
 H_1(M)=\mathbb Z\oplus(\mathbb Z/2)^3,
\]
so $p\leq1$.  In the flat case the normal abelian subgroup $\mathbb Z^n$ lies
in the Bieberbach translation subgroup.  Indeed, if $N\triangleleft\Pi$ is
abelian and $g=(a,V)\in N$, then
$t_\lambda g t_\lambda^{-1}g^{-1}=t_{(I-V)\lambda}\in N$ for every
$\lambda\in\Lambda$.  Commutativity with $g$ gives
$(I-V)^2\lambda=0$.  Since $\Lambda$ spans $\mathbb R^4$ and the
finite-order matrix $V$ is diagonalizable over $\mathbb C$, this forces
$V=I$.  The holonomy is therefore a quotient
of $\Pi/\mathbb Z^n\cong\mathbb Z^p$ and is generated by at most $p$ elements.
It would be cyclic, contrary to $\Phi\cong(\mathbb Z/2)^2$.  Thus this example
tests the endpoint $H_A=1$ beyond the model class, even though the ambient
holonomy is abelian.  The same homology calculation also shows that $M$ is not
a product of two flat surfaces.  Finally,
\[
 N(F;M,A)=8>4=\max\{N(F),N(F|_A)\}.
\]
Corollary~\ref{cor:minimum} gives $MF[F;M,A]=8$.

The same manifold also supports a map for which the correction comes entirely
from a nonidentity member of a nontrivial proper stabilizer group.  Replace the
affine subspace by
\[
 \widetilde A'=\operatorname{span}(e_1,e_2)
 +(0,0,\tfrac14,0).
\]
The $B$-glide preserves $\widetilde A'$, whereas the $C$- and $BC$-cosets
send it to a plane whose third coordinate is congruent to $-1/4$, not $1/4$,
modulo $\mathbb Z$.
Thus
\[
 \Pi_{A'}=\langle t_1,t_2,g_B\rangle,
 \qquad
 H_{A'}=\{I,B\}\cong\mathbb Z/2\subsetneq\Phi.
\]
The other translates are the parallel planes with transverse coordinates
$(\pm1/4,0)+\mathbb Z^2$, so they are disjoint or equal.  The glide $g_B$
restricts to $(x,y)\mapsto(x+1/2,-y)$; hence $A'$ is a Klein bottle and
$H_{A'}=\Hol(A')$.

Let $F':(M,A')\to(M,A')$ be induced by the affine lift
$\widetilde F'(x)=d'+D'x$, where
\[
 D'=\operatorname{diag}(-1,3,3,-1),
 \qquad d'=(0,0,-\tfrac12,0).
\]
It preserves $\widetilde A'$ because the third coordinate of its offset is
$3(1/4)-1/2=1/4$.  It also descends to $M$: translations are sent according
to $D'$, and the induced endomorphism satisfies
\[
 \varphi'(g_B)=g_B^{-1},
 \qquad
 \varphi'(g_C)=g_Ct_{(0,1,1,0)}.
\]
The ambient determinants for $I,B,C,BC$ are now
\[
 16,\quad0,\quad0,\quad64,
\]
so $N(F')=20$.  The restricted determinants for $I$ and $B$ are
\[
 \left|\det(I-D'|_{A_0})\right|=4,
 \qquad
 \left|\det(I-(BD')|_{A_0})\right|=8.
\]
Thus the identity sector is live, the $B$-sector is dead, and the entire
relative correction comes from the nonidentity element $B$.  Consequently
\[
 N(F'|_{A'})=6,
 \qquad
 N(F',F'|_{A'})=2,
 \qquad
 N(F';M,A')=20+\frac{8}{2}=24.
\]
This is interesting Schirmer theory, and Corollary~\ref{cor:minimum} gives
$MF[F';M,A']=24$.  If one wrongly included every dead sector in $\Phi$ and
divided the correction by $|\Phi|$, the result would be
$20+(8+4)/4=23$.  Omitting the nonidentity $B$-sector would instead give
$20$.  Thus this variant exercises the filter, index set, and normalization
in the second sum with $1\ne H_{A'}\subsetneq\Phi$.

\subsection{A nonabelian-holonomy example}

We finish with an example having nonabelian holonomy.  Let
$(M_{\mathrm{ax}},A_{\mathrm{ax}})$ and $F_{\mathrm{ax}}$ be the invariant-axis
example above.  Let $B_4=O^4_{21}$ be the
orientable closed flat $4$-manifold listed in Table~3 of
\cite{LambertRatcliffeTschantz2025}.  Its holonomy is the dihedral group
$D_4$ of order eight, and
\[
 H_1(B_4)=\mathbb Z\oplus(\mathbb Z/2)^2.
\]

There is an affine selfmap $G:B_4\to B_4$ with linear part $9I_4$ and which
induces the identity on $D_4$.  To see this, write the Bieberbach extension as
\[
 0\longrightarrow\Lambda_B\longrightarrow\Gamma_B
 \longrightarrow D_4\longrightarrow1
\]
with extension class $\xi\in H^2(D_4;\Lambda_B)$.  Positive-dimensional
cohomology of a finite group is annihilated by its order
\cite[Chapter~III]{Brown1982}, so $8\xi=0$ and $9\xi=\xi$.  The maps
$9I_4$ on $\Lambda_B$ and the identity on $D_4$ therefore induce an
endomorphism of $\Gamma_B$.  The affine realization theorem for
infra-nilmanifolds supplies $G$ \cite[Theorem~1.1]{Lee1995}.

The real holonomy representation of $D_4$ on $\mathbb R^4$ decomposes as
\[
 \mathbb R^4\cong\mathbf1\oplus\det\oplus\rho,
\]
where $\rho$ is the standard two-dimensional representation.  Indeed,
$b_1(B_4)=1$ gives one trivial summand, faithfulness forces the
two-dimensional irreducible summand, and orientability forces the remaining
character to be $\det\rho$.  If $r$ is a quarter rotation and $s$ a
reflection, then
\[
\begin{array}{c@{\qquad}c}
\toprule
 W&|\det(I-9W)|\\
\midrule
 1&4096\\
 r,r^3&5248\\
 r^2,s,rs,r^2s,r^3s&6400\\
\bottomrule
\end{array}
\]
and hence
\[
 N(G)=\frac{4096+2(5248)+5(6400)}8=5824.
\]

Now put
\[
 M=M_{\mathrm{ax}}\times B_4,
 \qquad A=A_{\mathrm{ax}}\times B_4,
 \qquad F=F_{\mathrm{ax}}\times G.
\]
The preserving lift has linear part
$D_{\mathrm{ax}}\oplus9I_4$, where
$D_{\mathrm{ax}}=\operatorname{diag}(-1,1,1)$, and
\[
 \Phi=H_A=(\mathbb Z/2)\times D_4.
\]
Since every $D_4$-sector of $G$ is essential, the two terms of
Theorem~\ref{thm:intro} factor as the corresponding terms for
$F_{\mathrm{ax}}$ times
$N(G)$.  Therefore
\[
 N(F)=4(5824)=23296
\]
and the relative correction is $5824$.  Thus
\[
 N(F;M,A)=29120.
\]
Corollary~\ref{cor:minimum} applies and gives
\[
 \boxed{N(F;M,A)=MF[F;M,A]=29120.}
\]

The holonomy $(\mathbb Z/2)\times D_4$ is nonabelian, so the preceding model
presentation argument also excludes this example from the model-solvmanifold
class.  It further lies outside the class of flat solvmanifolds, whose
holonomy groups are abelian \cite[Theorem~3.2]{Tolcachier2020}.  The
construction is deliberately a product, and its relative computation factors
accordingly.  Its distinct role is to show that the determinant formula
accommodates nonabelian holonomy; the four-dimensional examples above already
provide non-model tests with both $H_A=1$ and
$1\ne H_A\subsetneq\Phi$.

\subsection{A singular map requiring an invariant sublattice}

Take the dicosm of Section~5.1 crossed with a circle.  Its group $\Pi$ is
generated by $\Lambda=\mathbb Z^4$ and
\[
 g=(\tfrac12e_1,V),\qquad
 V=\operatorname{diag}(1,-1,-1,1),\qquad g^2=t_1.
\]
Set $\widetilde A=\mathbb R e_1$, so $A$ is a circle,
$\Pi_A=\langle g\rangle$, and $H_A=\Phi\cong\mathbb Z/2$.
The rank-one affine lift
\[
 \widetilde F(x_1,x_2,x_3,x_4)=(x_4/2,0,0,0)
\]
preserves $\widetilde A$ and satisfies equivariance with
\[
 \varphi(t_4)=g,\qquad
 \varphi(g)=\varphi(t_1)=\varphi(t_2)=\varphi(t_3)=1.
\]
In particular, $\varphi(\Lambda)\not\subseteq\Lambda$.  Since $\varphi^2$
is trivial, the core in Lemma~\ref{lem:core} is
\[
 L=\Lambda\cap\varphi^{-1}(\Lambda)
  =\mathbb Ze_1+\mathbb Ze_2+\mathbb Ze_3+2\mathbb Ze_4.
\]
Indeed, a lattice translation with fourth coordinate $m$ maps to $g^m$,
which is a translation exactly when $m$ is even.  Thus $[\Lambda:L]=2$,
$[\Pi:L]=4$, and $\varphi(L)\subseteq L$.

The linear part $D$ satisfies $D^2=0$ and $VD=D$, so both ambient
determinants are $1$.  The restriction is constant, and both restricted
determinants are also $1$.  Theorem~\ref{thm:intro} and
Corollary~\ref{cor:three-terms} give
\[
 N(F)=N(F|_A)=N(F,F|_A)=N(F;M,A)=1.
\]
There is also a direct check: $F^2$ is constant at the image of the origin,
so this is the unique fixed point of $F$.  The example exhibits a singular
affine pair map for which the full translation lattice cannot serve as the
invariant cover.

\paragraph{Computational verification.}
The displayed examples were checked using exact integer and rational
arithmetic.  The first two examples also agree with the independently proved
formula of \cite{Reite2008}, providing a separate check on the flat overlap.
An exploratory parameter sweep served only as an internal consistency check:
its reference calculation used the same sector rule as the theorem.
The verification scripts are not included with this paper; the proof and
displayed calculations do not require them.

\section{Declaration of generative AI and AI-assisted technologies in the
manuscript preparation process}\label{sec:ai}

This paper was intentionally developed as a disclosed human--AI research
collaboration between Aaron Reite, Anthropic's Claude, and OpenAI Codex.  The
retained record documents Claude's use in the early notebook and adversarial
stages and later hostile reviews, and Codex's use in source retrieval, proof
audit and compression, exact computations, drafting, and revision.  Their
roles overlapped in conjecture testing, proof development, source work,
computation, and exposition.  Both systems made substantial mathematical
contributions.

The collaboration materially changed the result.  In particular, it located
the finite-cover theorem of Jezierski, identified the flat specialization of
Wong's theorem as the exact trivial-holonomy boundary, separated the ambient
stabilizer holonomy from the intrinsic holonomy of $A$, and reduced the final
proof to the direct sector
map $Q_A\hookrightarrow Q$ together with Proposition~\ref{prop:injection} and
the mod-cover count of Kim, Lee, and Lee.

The AI systems also produced substantive false intermediate claims.  An early
version treated relative finite-cover averaging as new, some exploratory
fixed-class calculations omitted component twists, and an early formula
conflated $H_A$ with $\Hol(A)$.  These failures were retained in the research
record and used to design exact controls and a falsification checklist.

The research notes, claim ledger, scripts, and record of failed approaches
are retained privately and are not included with this paper.

Neither Claude nor Codex is listed as an author.  The human author retains
full responsibility for the statements and proofs.

\bibliographystyle{amsplain}
\bibliography{references}

\end{document}